\def\volno{1}\fi
\def\volyear{2017}\fi
\def\papno{P0.0}\fi
\newfont{\footsc}{cmcsc10 at 8truept}
\newfont{\footbf}{cmbx10 at 8truept}
\newfont{\footrm}{cmr10 at 10truept}
\renewcommand\paragraph{\@startsection{paragraph}{4}{\z@}
                                    {2ex \@plus.5ex \@minus.2ex}
                                    {-1em}
                                    {\normalfont\normalsize\bfseries}}
\renewcommand\subparagraph{\@startsection{subparagraph}{5}{\parindent}
                                       {2ex \@plus.5ex \@minus .2ex}
                                       {-1em}
                                      {\normalfont\normalsize\bfseries}}
\newlength{\BiblioSpacing}
\renewenvironment{thebibliography}[1]{
\begin{oldthebibliography}{#1}
\setlength{\parskip}{\BiblioSpacing}
\setlength{\itemsep}{\BiblioSpacing}
}
{
\end{oldthebibliography}
}
\def\abstractname{Abstract -}   
\def\abstract{\begin{adjustwidth}{1cm}{1cm} \par    \footnotesize \noindent {\bf \abstractname} 
\def\endabstract{ \end{adjustwidth} \smallskip }}
\newtheorem{theorem}{Theorem}[section]}
\newtheorem{proposition}[theorem]{Proposition}}
\newtheorem{definition}[theorem]{Definition}}
\newtheorem{remark}[theorem]{Remark}}
\title{\Large\bf Partition Problems and a Pattern of Vertical Sums}
\author{\sc E. Krinsky, S. Raianu, and A. Wittmond}
\date{}
\begin{document}
\maketitle
\thispagestyle{fancy}

\vskip 1.5em

\begin{abstract}
We give a possible explanation for the mystery of a missing number in the statement of a problem that asks for the non-negative integers to be partitioned into three subsets. We interpret the missing number as one of the clues that can lead to a more standard solution to the problem, using only congruence modulo five, and we give the details to the new solution, which is based on an algorithm inspired by noticing alternating differences between sums of elements of the same rank in the three sets. Our new  solution is equivalent to the partition consisting of numbers with remainders one or three modulo five, two or four modulo five, and multiples of five, which we call the standard partition. We then find all other similar statements with the same pattern of sums, we apply the algorithm to them, and we describe all the partitions obtained, up to a certain equivalence. There are $279936$ different such statements, they produce twenty different partitions (other than the standard one) whose sets of the first five columns are not permutations of each other, and only one of them (the one produced by the original statement of the problem we study) is equivalent to the standard partition. Finally, we construct infinitely many partitions equivalent to the standard one, and we give a possible generalization and a sample partition problem asking for the non-negative integers to be partitioned into four sets.
\end{abstract}
 
\begin{keywords}
Congruence modulo an integer; partitions of sets; partitions of integers
\end{keywords}

\begin{MSC}
11A07; 11P81; 05A15; 05A17; 05A18
\end{MSC}

\section{The problem and its two solutions}\label{sect1}

In this paper we study Problem 2.1.18 of \cite{zeitz}, whose statement is given in Figure 1 on the next page.

\begin{figure}[!htb]
  \begin{center}
     \includegraphics[width=100mm,scale=2.50]{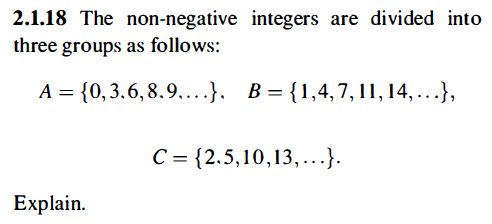}
  \end{center}
  \caption{\label{fig:problem} Scan from \cite[p. 24]{zeitz}}
\end{figure}

We note first that there are infinitely many ways to complete the partition required by the problem. Perhaps the easiest one is to take the complement of the given set of numbers, order it increasingly, then use it to fill in all the empty positions in some order. But these solutions do not use any potential hints, and can be applied to any similar statement involving arbitrary given numbers.

The solution given in the Instructor's Guide for \cite{zeitz} is the following: ``The first set contains only numbers whose numerals are drawn exclusively with curves; the second contains ``linear" numerals; the third contains the mixed numerals." A quick inspection of the statement shows that 12 does not 
appear, while 13 and 14 do. According to the solution, 12 should belong to $C$,
and so we would expect $C$ to be given as 
$C=\{2,5,10,12,13,\ldots\}$, or at least as $C=\{2,5,10,12,\ldots\}$. The other thing that looks like a clue is that there are five elements given in $A$ and $B$, but only four in $C$, and this looks like a hint that we should first find the fifth element of $C$. In this section
we show that these two hints (the missing 12 and the number of elements of the three sets) lead to another solution to the problem, and that solution can be found (starting with the seventh elements of each set) uing a standard
congruence classes argument. We begin by writing the sets 
$A$, $B$ and $C$ one below the other\\
\[ \begin{array}{cccccccc}
A & = & \{  0, & 3, & 6, & 8, & 9, & \ldots  \}\\
B & = & \{  1, & 4, & 7, & 11, & 14, & \ldots  \}\\
C & = & \{  2, & 5, & 10, & 13, & & \ldots  \},\\
\end{array} \]

We see that in our font 7 looks like it should belong to $C$. However, on a microwave oven or a cable box display, all numbers are ``linear". The solution we find in this section does not depend on fonts.

Let's see what happens when we add the first elements in each set, then the second elements, and so on. The first four sums of elements of the same rank are: 3, 12, 23, and 32. We notice that the 
differences between consecutive sums are 9, 11, 9. \\

{\bf We will assume that this is a pattern, and we will find another solution to the problem in Figure \ref{fig:problem} based on this assumption.}\\

We denote, as usual, by $\lfloor x\rfloor$ the greatest integer less than or equal to $x$, and we give a couple of definitions.

\begin{definition}\label{stanpar} The {\em\bf standard partition} of the non-negative integers is given by
\begin{align*}
S_1 &=\{3n-\lfloor\frac{n}{2}\rfloor-2\mid n\ge 1\}\\
S_2 & =\{3n-\lfloor\frac{n}{2}\rfloor-1\mid n\ge 1\}\\
S_3 &= \{5n-5\mid n\ge 1\}
\end{align*}
Note that the sum of the $n$-th elements is $S(n)=11n-2\lfloor\frac{n}{2}\rfloor-8$. 
\end{definition}

\begin{remark}\label{rempar}
The formulas for the three consecutive elements of ranks $2k+1$, $2k+2$, and $2k+3$ ($k\ge 0$) in the standard partition can be also written as:\\
 \[ \begin{array}{ccc}
5k+1 & 5k+3 & 5k+6 \\
5k+2 & 5k+4 & 5k+7 \\
5\cdot(2k) & 5\cdot(2k+1) & 5\cdot(2k+2),
\end{array} \]
This shows that $S_1$ consists of the numbers with remainder 1 or 3 (mod 5), 
the elements of $S_2$ are congruent to 2 or 4 (mod 5), while $S_3$ is the set of
multiples of 5.
It also shows that the standard partition also has alternating differences between vertical sums of 9 and 11: $S(2k+2)-S(2k+1)=(3-1)+(4-2)+5=9$, while $S(2k+3)-S(2k+2)=(6-3)+(7-4)+5=11$ for $k\ge 0$. (This can also be written as $S(n+1)-S(n)=10+(-1)^n$ for all $n\ge 1$). 
\end{remark}

\begin{definition}\label{eqpar}
Two partitions of the non-negative integers, given by $A=\{a_k\mid k\ge 1\}$, $B=\{b_k\mid k\ge 1\}$, $C=\{c_k\mid k\ge 1\}$, and $D=\{d_k\mid k\ge 1\}$, $E=\{e_k\mid k\ge 1\}$, $F=\{f_k\mid k\ge 1\}$, are said to be {\em\bf equivalent} if there exists a number $N$ such that for all $k>N$ we have $a_k=d_k$, $b_k=e_k$, and $c_k=f_k$ (note that this implies that the sets $\{a_k,b_k,c_k\mid 1\le k\le N\}$ and $\{d_k,e_k,f_k\mid 1\le k\le N\}$ are permutations of each other).
\end{definition}

With these two definitions, we prove the following:
\begin{proposition}\label{secsol}
There exists a solution to the problem from  Figure \ref{fig:problem}  which is equivalent to the standard partition.
\end{proposition}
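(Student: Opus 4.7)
The plan is to exhibit an explicit extension of the three given finite sequences to a full partition of $\mathbb{N}_{\ge 0}$ that coincides rank-by-rank with the standard partition from a certain index on; equivalence then follows immediately from Definition~\ref{eqpar}. Existence is what the proposition asks for, so a single explicit construction will suffice, and the algorithm the paper has been describing (vertical-sum differences together with a standard-partition tail) is exactly what produces it.

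First, I will apply the assumed alternating-difference pattern $9,11,9,11,\ldots$ to the four given vertical sums $3,12,23,32$: the fifth vertical sum is forced to be $43$, so combining with $A_5=9$ and $B_5=14$ gives $C_5=20$. Next, I will declare $A_k:=S_1(k)$, $B_k:=S_2(k)$, $C_k:=S_3(k)$ for every $k\ge 7$, using the closed forms of Definition~\ref{stanpar}. What remains is to determine $A_6$, $B_6$, $C_6$, and here I will use Definition~\ref{eqpar}: since the tails already agree with the standard partition from rank $7$ onward, the union of the first six ranks of the new partition must coincide with the union of the first six ranks of $S_1,S_2,S_3$, namely $\{0,1,\ldots,15\}\cup\{20,25\}$.

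Comparing this 18-element set to the 15 integers already committed (the 14 initially given plus $C_5=20$), exactly the three integers $12$, $15$, $25$ remain to be placed at rank $6$. The strict monotonicity constraints $9=A_5<A_6<S_1(7)=16$, $14=B_5<B_6<S_2(7)=17$, and $20=C_5<C_6<S_3(7)=30$ then single out $A_6=12$, $B_6=15$, $C_6=25$ uniquely among the three candidates. With these choices fixed, I will verify the three properties needed: the initial data of Figure~\ref{fig:problem} is preserved by construction; $A,B,C$ are pairwise disjoint and cover $\mathbb{N}_{\ge 0}$, because the first six ranks exhaust the same $18$ integers as the first six ranks of $S_1,S_2,S_3$ while the tails match; and equivalence with the standard partition holds by taking $N=6$ in Definition~\ref{eqpar}.

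The main obstacle is not difficulty but bookkeeping: one must confirm the 18-element comparison carefully and check that the three monotonicity windows intersect $\{12,15,25\}$ in singletons (with $12$ available only for $A_6$, $15$ only for $B_6$, and $25$ only for $C_6$). Everything else in the proof is routine arithmetic verification against the formulas in Definition~\ref{stanpar}.
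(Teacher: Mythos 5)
Your construction is correct and is essentially the paper's own proof: both force $C_5=20$ from the alternating sum pattern, place $12$, $15$, $25$ at rank six, arrange for the tail to agree with the standard partition from rank seven on, and conclude with $N=6$ in Definition~\ref{eqpar} (the paper obtains the rank-six entries and the tail via the greedy ``first two unused numbers'' algorithm rather than by direct definition, but the resulting partition is the same). One minor slip: the window $9<A_6<16$ contains both $12$ and $15$, so the three windows do not all meet $\{12,15,25\}$ in singletons as you claim; the assignment is still forced because $15$ and $25$ are the unique candidates for $B_6$ and $C_6$, and in any case existence only requires exhibiting one valid choice.
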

\begin{proof}
We assume that the pattern of the vertical sums continues, so that
the fifth sum is $32+11=43$, and so the fifth number in $C$ is 
$43-9-14=20$. The sixth sum is then $43+9=52$, and we try the first two 
unused numbers,
12 and 15, as the sixth elements of $A$ and $B$, respectively. We get that the
sixth element in $C$ is $52-12-15=25$. The next two
unused numbers are then 16 and 
17. Putting them on the seventh position in $A$ and $B$, and assuming the 
seventh sum is $52+11=63$, we get that the seventh element of $C$ is $63-16-17=30$.
From this point on ($n\ge 8$) we use the following\\[1mm]
{\bf Algorithm: We put the first two
unused numbers as the $n$-th elements of $A$ and $B$, then add them and substract their sum from $S(n)$ to find the $n$-th element of $C$.}\\[1mm]
In view of Remark \ref{rempar}, the standard partition can be constructed using this algorithm for $n\ge 2$.
Now we take $N=6$ in Definition \ref{eqpar}: the set of the first six elements in the solution constructed above is a permutation of the set of the first six elements in the standard partition. Hence, from the seventh element on, the solution constructed above and the standard partition have the same elements, because they are both constructed using the same algorithm and the first two unused numbers are the same at each step. 
\end{proof}

We end this section by showing that we can start with the standard partition and change it by a permutation in order to get the statement in Figure \ref{fig:problem} (this can be seen as another proof of Proposition \ref{secsol}). In the next section we will show other ways of obtaining partitions equivalent to the standard partition by performing certain permutations. We start now with $A$, $B$, and $C$ the standard partition: $A$ is the set of numbers congruent to 1 or 3 
(mod 5), $B$ consists of the numbers congruent to 2 or 4 (mod 5), while $C$ is
the set of multiples of 5:
\[ \begin{array}{ccccccccc}
A & = & \{  1, & 3, & 6, & 8, & 11, & 13, &\ldots  \}\\
B & = & \{  2, & 4, & 7, & 9, & 12, & 14, &\ldots  \}\\
C & = & \{  0, & 5, & 10, & 15, & 20, & 25, &\ldots  \}.\\
\end{array} \]
After noticing that if we restrict our attention to the second, third and 
fourth columns only, the numbers on the first row can be written using curved
lines only, the numbers on the third row need both curves and straight lines,
while 4 and 7 can be drawn with straight lines only (remember to keep Fig. \ref{fig:problem} in mind when we talk about 7), we permute the numbers on
the first column accordingly, and we get
\[ \begin{array}{ccccccccc}
A & = & \{  0, & 3, & 6, & 8, & 11, & 13, &\ldots  \}\\
B & = & \{  1, & 4, & 7, & 9, & 12, & 14, &\ldots  \}\\
C & = & \{  2, & 5, & 10, & 15, & 20, & 25, &\ldots  \}.\\
\end{array} \]
We now trade 9 in the second set for the first available ``linear''
number, which is 11, and we get\\
\[ \begin{array}{ccccccccc}
A & = & \{  0, & 3, & 6, & 8, & 9, & 13, &\ldots  \}\\
B & = & \{  1, & 4, & 7, & 11, & 12, & 14, &\ldots  \}\\
C & = & \{  2, & 5, & 10, & 15, & 20, & 25, &\ldots  \}.\\
\end{array} \]
Now the fourth sum is broken, so we must trade 15 for 13 to repair it:\\
\[ \begin{array}{ccccccccc}
A & = & \{  0, & 3, & 6, & 8, & 9, & 15, &\ldots  \}\\
\label{par}B & = & \{  1, & 4, & 7, & 11, & 12, & 14, &\ldots  \}\\
C & = & \{  2, & 5, & 10, & 13, & 20, & 25, &\ldots  \}.\\
\end{array} \]
The fourth sum is now repaired, but both the fifth and the sixth ones are broken,
the fifth one is down two, and the sixth one is up two. Consequently, they can
be both repaired by switching the 12 and the 14:\\
\[ \begin{array}{ccccccccc}
A & = & \{  0, & 3, & 6, & 8, & 9, & 15, &\ldots  \}\\
B & = & \{  1, & 4, & 7, & 11, & 14, & 12, &\ldots  \}\\
C & = & \{  2, & 5, & 10, & 13, & 20, & 25, &\ldots  \}.\\
\end{array} \]
All we need to do now is erase 15, 12, 20 and 25 and ask the question from
Fig. \ref{fig:problem}.
\section{Related partition problems}\label{sect2}
After completing the work on the first section we were told by Paul Zeitz \cite{z2} that the initial omission of 12 may have been just a typo, and therefore the pattern of the alternating differences between sums of elements of the same rank in the three sets may have been a fluke. In this section we investigate what the other possible statements are for the problem in Section \ref{sect1} if we preserve the pattern of the sums, and we answer the following questions:
\begin{enumerate}
\item In how many ways can we fill in the 14 given elements of the sets $A$, $B$, and $C$ in Section \ref{sect1}, such that the first four sums are the same, i.e. 3, 12, 23, and 32, and the fifth sum can be 43?
\item For how many of these the algorithm described in the proof of Proposition \ref{secsol} produces a partition that is equivalent to the standard partition?
\item How many different partitions (up to equivalence) are there?
\end{enumerate}
We start answering the first question by showing that there are only 36 different possibilities for the first five elements of $A$, $B$, and $C$, such that the sums are 3, 12, 23, 32, and 43, if we order the $k$-th elements of $A$, $B$, and $C$ increasingly for $k=1,2,3,4,5$. Note that counting the combinations of 15 elements instead of 14 cuts the number of possible statements by two thirds, while the number of non-equivalent
solutions remains the same. The total number of different statements, i.e. the answer to the first question above, is then easily seen to be $36\cdot(3!)^5=6^7=279936$.

In order to list the 36 possibilities described above we notice first that there are only two ways to write 23 as a sum of three numbers greater than or equal to 6: $6+7+10$, and $6+8+9$. There are six ways to write 32 as a sum of three numbers greater than or equal to 7, such that the set of three numbers is disjoint from one of the sets $\{6,7,10\}$ and $\{6,8,9\}$: $7+10+15$, $7+11+14$, $7+12+13$, $8+9+15$, $8+11+13$, and $9+11+12$. The elements of ranks 3 and 4 in the three sets can therefore be one of the following six combinations:
\[ \begin{array}{ccccccccccccccccc}
A  =  \{  6, & 8\} &&\{  6, & 8\} &&\{  6, & 9\} &&\{  6, & 7\}&&\{  6, & 7\}&&\{  6, & 7\}\\
B  =  \{  7, & 9\} &&\{  7, & 11\} &&\{  7, & 11\} &&\{  8, & 10\}&&\{  8, & 11\}&&\{  8, & 12\}\\
C  =  \{  10, & 15\} &&\{  10, & 13\} &&\{  10, & 12\}&&\{  9, & 15\}&&\{  9, & 14\}&&\{  9, & 13\}\\
\end{array} \]
Finally, there are 25 ways to write 43 as a sum of three numbers greater than or equal to 8, such that the set of three numbers is disjoint from at least one of the 6 sets of 6 numbers listed above: $8+13+22$, $8+14+21$, 
$8+15+20$, 
$\ldots$ $13+14+16$. 

We list all of them as columns:
\[ \begin{array}{ccccccccccccccccc}
8&8&8&8&8&9&9&9&9&9&9&10&10&10&10&10&10\\
13&14&15&16&17&11&12&13&14&15&16&11&12&13&14&15&16\\
22&21&20&19&18&23&22&21&20&19&18&22&21&20&19&18&17
\end{array} \]
\[ \begin{array}{cccccccc}
11&11&11&11&12&12&12&13\\
12&13&14&15&13&14&15&14\\
20&19&18&17&18&17&16&16
\end{array} \]

We list now the 36 possibilities. Since for the first two elements in each set the choice is unique, we will only list elements of ranks 3, 4, and 5. Moreover, in each group of six choices, the elements of ranks 3 and 4 are the same, so we only list them once. We assign numbers to these 36 choices from left to right and up to down so the first group contains choices 1 through 6, the second one 7 through 12, and so on. The number assigned to each choice is listed in boldface on top of the elements of rank 5 in that choice. Here are the 36 choices:


\[ \begin{array}{cccccccccc}
&&&&{\bf 1}&{\bf 2}&{\bf 3}&{\bf 4}&{\bf 5}&{\bf 6}\\
A  = & \{  6, & 8,&& 11\} & 11\}& 11\}& 12\}& 12\}& 13\}\\
B  = & \{  7, & 9,&& 12\} & 13\}& 14\}& 13\}& 14\}& 14\}\\
C  = & \{  10, & 15,&& 20\} & 19\}& 18\}& 18\}& 17\}& 16\}\\
\end{array} \]
\[ \begin{array}{cccccccccc}
&&&&{\bf 7}&{\bf 8}&{\bf 9}&{\bf 10}&{\bf 11}&{\bf 12}\\
A  = & \{  6, & 8,&& 9\} & 9\}& 9\}& 9\}& 12\}& 12\}\\
B  = & \{  7, & 11,&& 12\} & 14\}& 15\}& 16\}& 14\}& 15\}\\
C  = & \{  10, & 13,&& 22\} & 20\}& 19\}& 18\}& 17\}& 16\}\\
\end{array} \]
\[ \begin{array}{cccccccccc}
&&&&{\bf 13}&{\bf 14}&{\bf 15}&{\bf 16}&{\bf 17}&{\bf 18}\\
A  = & \{  6, & 9,&& 8\} & 8\}& 8\}& 8\}& 8\}& 13\}\\
B  = & \{  7, & 11,&& 13\} & 14\}& 15\}& 16\}& 17\}& 14\}\\
C  = & \{  10, & 12,&& 22\} & 21\}& 20\}& 19\}& 18\}& 16\}\\
\end{array} \]
\[ \begin{array}{cccccccccc}
&&&&{\bf 19}&{\bf 20}&{\bf 21}&{\bf 22}&{\bf 23}&{\bf 24}\\
A  = & \{  6, & 7,&& 11\} & 11\}& 11\}& 12\}& 12\}& 13\}\\
B  = & \{  8, & 10,&& 12\} & 13\}& 14\}& 13\}& 14\}& 14\}\\
C  = & \{  9, & 15,&& 20\} & 19\}& 18\}& 18\}& 17\}& 16\}\\
\end{array} \]
\[ \begin{array}{cccccccccc}
&&&&{\bf 25}&{\bf 26}&{\bf 27}&{\bf 28}&{\bf 29}&{\bf 30}\\
A  = & \{  6, & 7,&& 10\} & 10\}& 10\}& 10\}& 12\}& 12\}\\
B  = & \{  8, & 11,&& 12\} & 13\}& 15\}& 16\}& 13\}& 15\}\\
C  = & \{  9, & 14,&& 21\} & 20\}& 18\}& 17\}& 18\}& 16\}\\
\end{array} \]
\[ \begin{array}{cccccccccc}
&&&&{\bf 31}&{\bf 32}&{\bf 33}&{\bf 34}&{\bf 35}&{\bf 36}\\
A  = & \{  6, & 7,&& 10\} & 10\}& 10\}& 10\}& 11\}& 11\}\\
B  = & \{  8, & 12,&& 11\} & 14\}& 15\}& 16\}& 14\}& 15\}\\
C  = & \{  9, & 13,&& 22\} & 19\}& 18\}& 17\}& 18\}& 17\}\\
\end{array} \]
 If we eliminate the choices whose sets of first five elements are a permutation of each other (this eliminates the whole fourth group, choices 19 through 24, because its first two columns are a permutation of the first two columns in the first group, then keep 2 and discard 9, keep 3 and discard 27, keep 4 and discard 33, keep 7 and discard 13 and 31, keep 8 and discard 26, keep 14 and discard 25, keep 29 and discard 35) and we also leave out the standard partition (choices 1 and 15), we end up with 20 possibilities. 

Recall that the algorithm described in the proof of Proposition \ref{secsol} puts the first two unused numbers as the next elements of $A$ and $B$, then uses the fact that the sum of the $n$-th elements is $S(n)=11n-2\lfloor\frac{n}{2}\rfloor-8$ to find the next element of $C$. 


\newpage 

\begin{figure}[!htb]
  \begin{center}
     \includegraphics[width=156mm,scale=1.50]{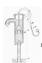}
  \end{center}
  \caption{\label{fig:hask1} The Haskell program, part 1}
\end{figure}

 \newpage

\begin{figure}[!htb]
  \begin{center}
     \includegraphics[width=156mm,scale=1.50]{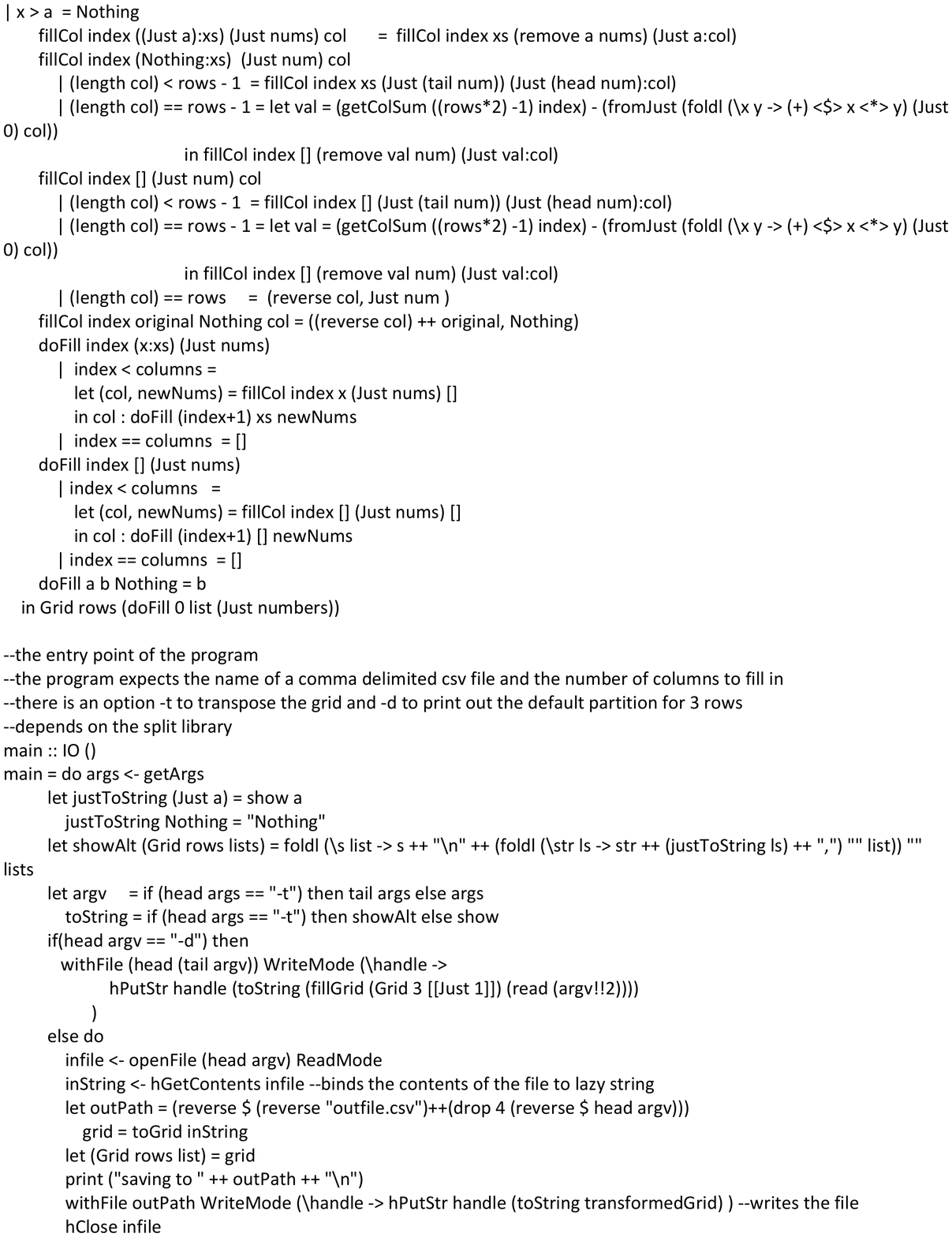}
  \end{center}
  \caption{\label{fig:hask} The Haskell program, part 2}
\end{figure}

\newpage

\begin{figure}[!htb]
  \begin{center}
     \includegraphics[width=150mm,scale=0.90]{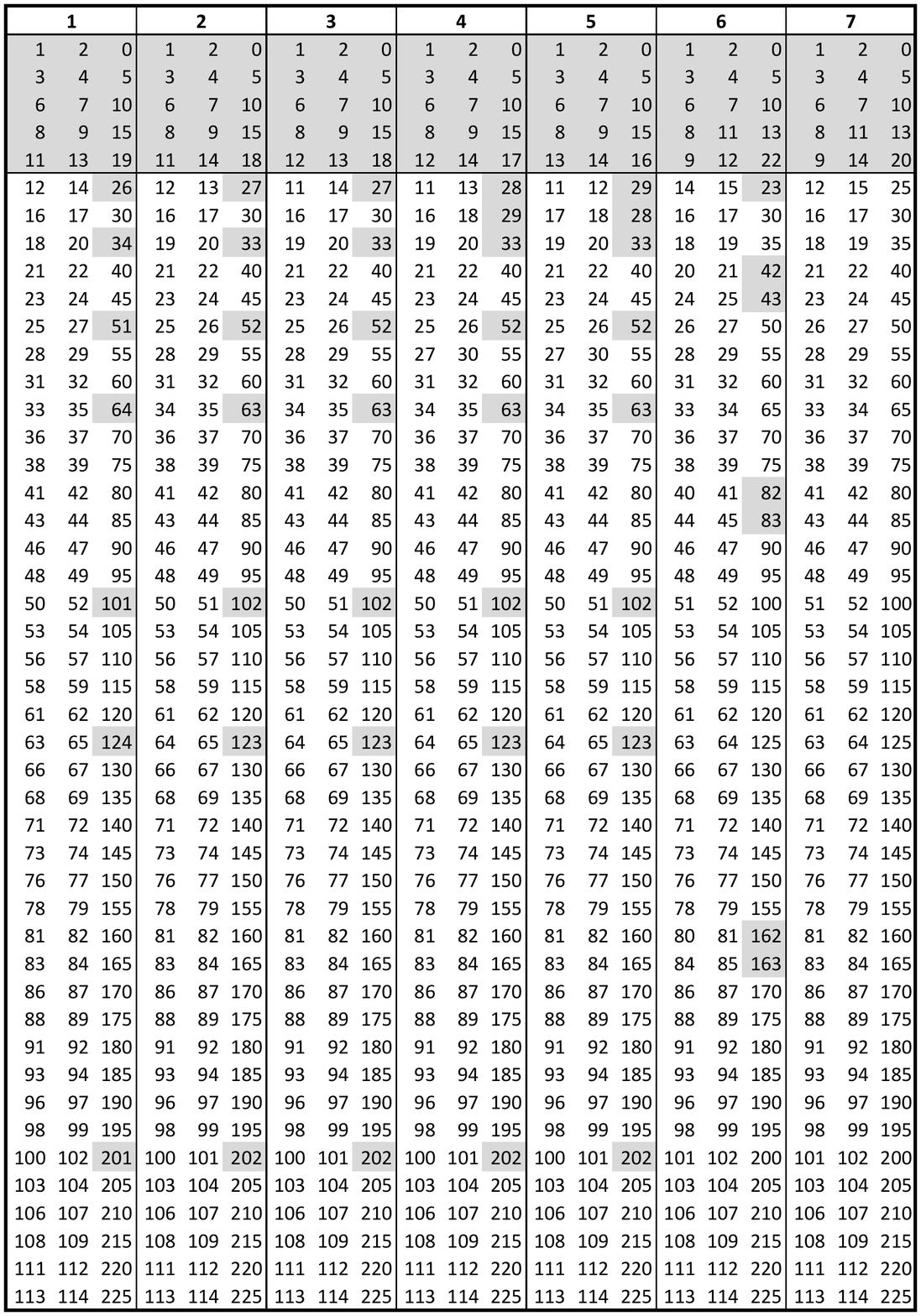}
  \end{center}
  \caption{\label{fig:part1} Partitions one through seven}
\end{figure}

\newpage

\begin{figure}[!htb]
  \begin{center}
     \includegraphics[width=150mm,scale=0.90]{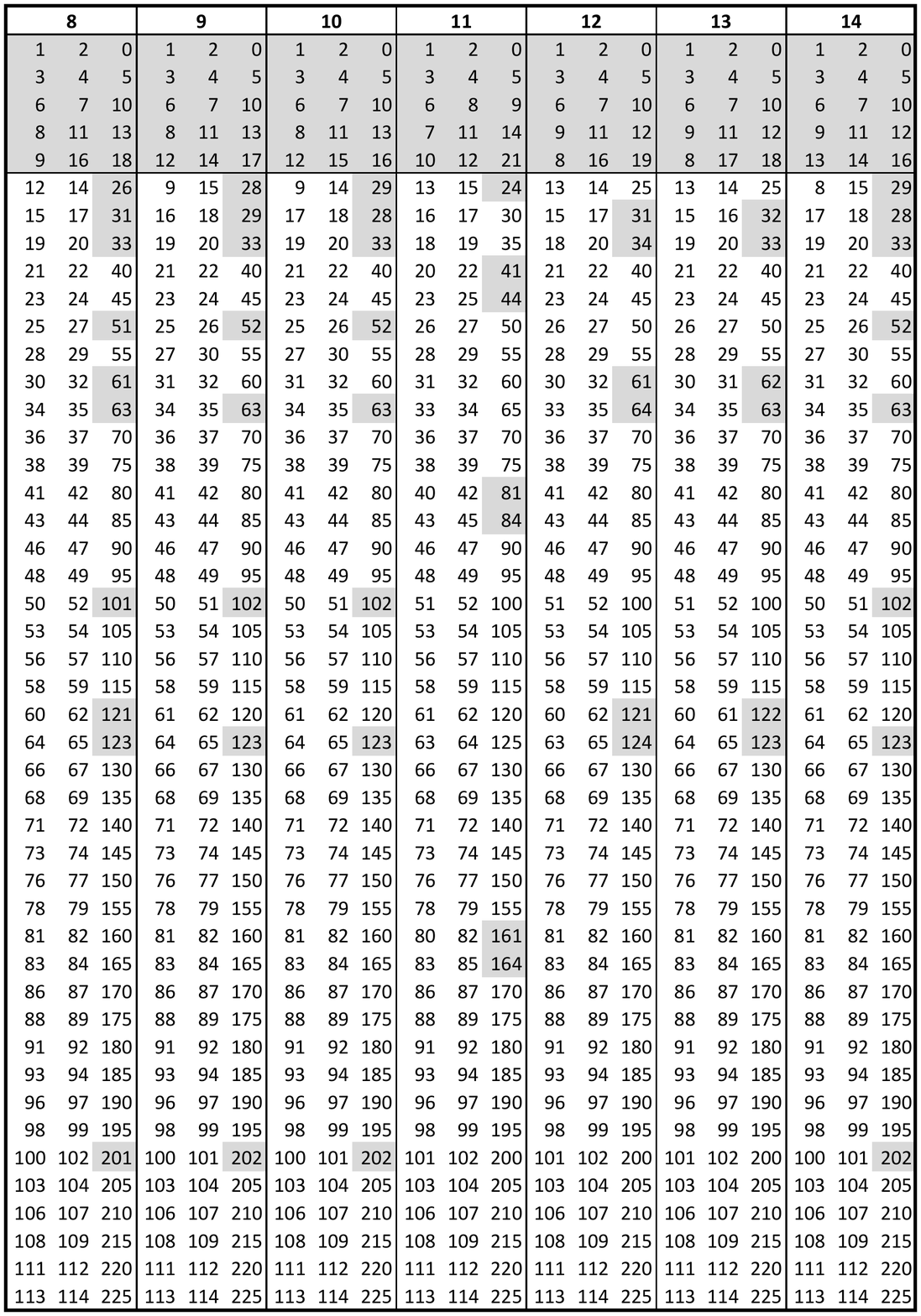}
  \end{center}
  \caption{\label{fig:part2} Partitions eight through fourteen}
\end{figure}

\newpage

\begin{figure}[!htb]
  \begin{center}
     \includegraphics[width=150mm,scale=2.50]{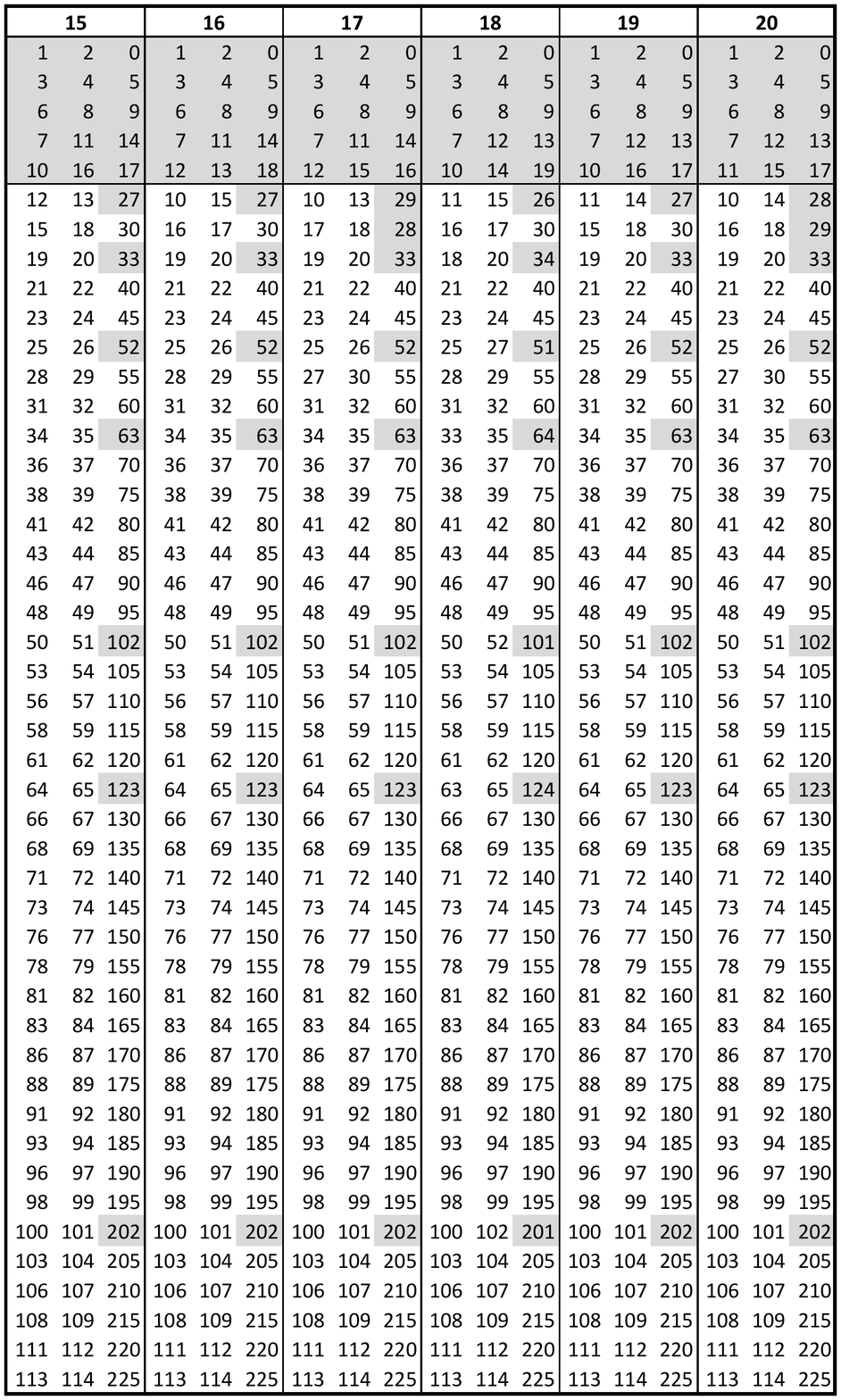}
  \end{center}
  \caption{\label{fig:part3} Partitions fifteen through twenty}
\end{figure}

\newpage

We wrote a Haskell program (shown in Figures \ref{fig:hask1} and \ref{fig:hask}) which applies the algorithm  from the proof of Proposition \ref{secsol} to the 20 options, and we list the first 46 elements in each set in Figures \ref{fig:part1}, \ref{fig:part2}, and \ref{fig:part3} (the sets $A$, $B$, and $C$ are listed vertically).

We see from the figures that we obtain 20 partitions, and only one of them (partition number 7 from Figure \ref{fig:part1}, which is actually the partition constructed in Proposition \ref{secsol} modulo a permutation of the first elements) seems to be equivalent to the standard partition. We will prove that this is actually true, and we count the number of equivalence classes in the following result.

\begin{proposition}\label{counteqcl}
The 20 partitions in Figures \ref{fig:part1}, \ref{fig:part2}, and \ref{fig:part3} contain eight equivalence classes:\\
Class 1: partitions 1 and 18.\\
Class 2: partitions 2, 3, 4, 5, 9, 10, 14, 15, 16, 17, 19, and 20.\\
Class 3: partition 6.\\
Class 4: partition 7 (it is equivalent to the standard partition).\\
Class 5: partition  8.\\
Class 6: partition 11.\\
Class 7: partition 12.\\
Class 8: partition 13.
\end{proposition}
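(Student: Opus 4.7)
The plan is to reduce the proposition to a finite verification by characterizing equivalence in terms of the set of numbers used in the first $n$ rows. For an algorithm-generated partition $(A,B,C)$ define $U_n := \{a_k, b_k, c_k : 1 \le k \le n\}$. I will first prove the following criterion: two algorithm-generated partitions $(A,B,C)$ and $(D,E,F)$ are equivalent if and only if $U_n = V_n$ for some $n \ge 5$, where $V_n$ is the corresponding used set for $(D,E,F)$. The nontrivial direction is ``if'': once $U_n = V_n$, the algorithm from step $n+1$ onward selects $a_{n+1}$ as the smallest element of $\mathbb{Z}_{\ge 0} \setminus U_n$, $b_{n+1}$ as the second smallest, and $c_{n+1} = S(n+1) - a_{n+1} - b_{n+1}$, so the two partitions agree in every subsequent row. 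The converse follows from the parenthetical clause in Definition \ref{eqpar}, which already asserts that equivalence forces the sets of the first $N$ entries to be permutations of each other.

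Second, I would apply this criterion to verify each of the eight listed classes by extending, if necessary, the Haskell computation in Figures \ref{fig:hask1}--\ref{fig:hask} until one can exhibit, for each class, a common step $n$ at which all partitions in that class share the same $U_n$. For class 4 (partition 7 and the standard partition) the merging is essentially already exhibited in the proof of Proposition \ref{secsol}; for the singleton classes 3, 5, 6, 7, 8 the condition is vacuous. The substantive cases are class 1, where partitions 1 and 18 must be shown to reach a common $U_n$, and class 2, where the twelve partitions $2,3,4,5,9,10,14,15,16,17,19,20$ must all reach a common $U_n$. In each case this is a mechanical check: compare the rows of the relevant partitions in Figures \ref{fig:part1}--\ref{fig:part3} and locate the first row from which they agree.

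Third, and this is the delicate part, I must show that partitions listed in different classes are \emph{not} equivalent. The strategy is to extract, for each partition, an eventual invariant of the algorithm. Because the starting configurations differ from the standard partition only in a small initial window, the algorithm quickly enters the regime in Remark \ref{rempar} where $a_n, b_n, c_n$ follow the mod $5$ pattern $5k+1,3;\ 5k+2,4;\ 5k$, possibly offset by a bounded ``imbalance'' between the used set and an initial segment of $\mathbb{Z}_{\ge 0}$. Tracking this imbalance (equivalently, the symmetric difference $U_n \triangle \{0,1,\dots,3n-1\}$, which must stabilize) should yield a finite invariant whose value distinguishes the eight classes.

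The main obstacle will be ruling out ``surprise merges'' past the 46 rows shown in the figures---i.e., proving that two partitions from different classes cannot coincidentally have $U_n = V_n$ for some very large $n$. I expect to handle this either via the invariant above or by a uniform bound: there exists an explicit $n_0$, computable from the 20 starting configurations, past which two used sets that have not yet coincided never will, because the symmetric differences $U_n \triangle V_n$ eventually become constant. With such a bound in hand the entire proposition reduces to a finite check, and the Haskell program of Figures \ref{fig:hask1}--\ref{fig:hask} carries that check out.
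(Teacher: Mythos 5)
Your reduction of within-class equivalence to a finite check is sound and matches what the paper does implicitly: if two algorithm-generated partitions have the same used set $U_n=V_n$ at some step $n$ past the hand-specified rows, they agree forever after, and the converse follows from the parenthetical in Definition \ref{eqpar}. The gap is in the third step, the cross-class non-equivalence, and it is precisely the step you flag as ``delicate.'' Your proposed invariant rests on the claim that the symmetric differences $U_n \triangle V_n$ (or $U_n\triangle\{0,\dots,3n-1\}$) ``eventually become constant,'' but this is false for these partitions. Even for the standard partition, $U_n$ is an initial segment of the integers together with a linearly growing tail of multiples of $5$, so the deviation from an initial segment grows without bound; and for a partition in, say, Class 1, the symmetric difference with the standard partition's used set is eventually always nonempty but its \emph{elements} keep changing (they roughly double), so it never stabilizes to a fixed finite set. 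Nothing in your sketch rules out the ``surprise merge'' you worry about, and no finite computation alone can, since equivalence is a statement about all sufficiently large $n$.

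The paper closes this gap with an explicit induction that you would need to supply: for each class it identifies the positions at which the partition deviates from the standard one (e.g.\ $2^k\cdot 10+1$ and $2^k\cdot 12+2$ for Class 1, three families of positions for Class 5, $2^k+1$ and $2^k+2$ for Classes 3 and 6) and the exact swapped values there, and then shows that the deviation at position $p$ forces, via the ``first two unused'' rule, the same type of deviation at the doubled position. This self-perpetuation proves that every class other than Class 4 differs from the standard partition at infinitely many positions, and the class-specific signature (which positions, and which of the three sets receive the swapped elements) is what separates the eight classes from one another. Note also that your remark that the condition is ``vacuous'' for the singleton classes only disposes of the easy half: the singletons still carry the full burden of the non-equivalence argument, which is where all the work lies.
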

\begin{proof}
We will describe a partition in each equivalence class by explaining how they differ from the standard partition.\\
Class 1: A partition in this class coincides with the standard partition after the 11-th elements, with the following exceptions:\\
On positions $2^k10+1$, $k\ge 0$, the standard partition has elements
$$2^k25+1, 2^k25+2,\mbox{ and }2^k50,$$
while a partition in Class 1 has elements
$$2^k25, 2^k25+2,\mbox{ and }2^k50+1.$$
We will only explain this case in detail, all the other ones are similar. When we use the algorithm from the proof of Proposition \ref{secsol} and we get to position $2^k20+1$, where we need to put in the numbers $2^{k+1}25+1=2^k50+1$ and $2^{k+1}25+2=2^k50+2$, we notice that the element $2^k50+1$ was already used on position $2^k10+1$, but $2^k50$ was not used. Therefore, we need to use $2^k50$ instead of $2^k50+1$, along with $2^k50+2$. The sum of the three elements on position $2^k20+1$ is
$$11(2^k20+1)-2\lfloor\frac{2^k20+1}{2}\rfloor -8=2^k200+3,$$
so the element in the third set is $2^k200+3-2^k50-2-2^k50=2^k100+1=2^{k+1}50+1$, so the claim is established by induction.\\[2mm]
\newpage

On positions $2^k12+2$, $k\ge 0$, the standard partition has elements
$$2^k30+3, 2^k30+4,\mbox{ and }2^k60+5,$$
while a partition in Class 1 has elements
$$2^k30+3, 2^k30+5,\mbox{ and }2^k60+4.$$
We also need to check that $2^k10+1\ne 2^l12+2$ for all $k,l\ge 0$. This is obvious because the left hand side is odd and the right hand side is even. The same argument holds for all equivalence classes except class 5.

 Finally, we note that the two types of exceptions are alternating, which follows from the following chain of inequalities which is true for all $k$:
$$2^k50+1<2^k60+5<2^k100+1<2^k120+4.$$
Class 2: A partition in this class coincides with the standard partition after the 11-th elements, with the following exceptions:\\
On positions $2^k10+1$, $k\ge 0$, the standard partition has elements
$$2^k25+1, 2^k25+2,\mbox{ and }2^k50,$$
while a partition in Class 2 has elements
$$2^k25, 2^k25+1,\mbox{ and }2^k50+2.$$
On positions $2^k12+2$, $k\ge 0$, the standard partition has elements
$$2^k30+3, 2^k30+4,\mbox{ and }2^k60+5,$$
while a partition in Class 2 has elements
$$2^k30+4, 2^k30+5,\mbox{ and }2^k60+3.$$
Class 3: Partition 6 coincides with the standard partition after the 9-th elements, with the following exceptions:\\
On positions $2^k+1$, $k\ge 3$, the standard partition has elements
$$2^{k-1}5+1, 2^{k-1}5+2,\mbox{ and }2^k5,$$
while partition 6 has elements
$$2^{k-1}5, 2^{k-1}5+1,\mbox{ and }2^k5+2.$$
\newpage

On positions $2^k+2$, $k\ge 3$, the standard partition has elements
$$2^{k-1}5+3, 2^{k-1}5+4,\mbox{ and }5(2^k+1),$$
while partition 6 has elements
$$2^{k-1}5+4, 5(2^{k-1}+1),\mbox{ and }2^k5+3.$$
Class 4: Partition 7 coincides with the standard partition starting with the 7-th elements.\\
Class 5: Partition 8 coincides with the standard partition after the 11-th elements, with the following exceptions:\\
On positions $2^k10+1$, $k\ge 0$, the standard partition has elements
$$2^k25+1, 2^k25+2,\mbox{ and }2^k50,$$
while partition 8 has elements
$$2^k25, 2^k25+2,\mbox{ and }2^k50+1.$$
On positions $2^k12+1$, $k\ge 0$, the standard partition has elements
$$2^k30+1, 2^k30+2,\mbox{ and }2^k60,$$
while partition 8 has elements
$$2^k30, 2^k30+2,\mbox{ and }2^k60+1.$$
On positions $2^k12+2$, $k\ge 0$, the standard partition has elements
$$2^k30+3, 2^k30+4,\mbox{ and }2^k60+5,$$
while partition 8 has elements
$$2^k30+4, 2^k30+5,\mbox{ and }2^k60+3.$$
We also need to check that $2^k10+1\ne 2^l12+1$ for all $k,l\ge 0$. This is true because the left hand side is congruent to 1 modulo 5, and the right hand side is not.\\[2mm]
Class 6: Partition 11 coincides with the standard partition after the 9-th elements, with the following exceptions:\\
On positions $2^k+1$, $k\ge 3$, the standard partition has elements
$$2^{k-1}5+1, 2^{k-1}5+2,\mbox{ and }2^k5,$$
while  partition 11 has elements
$$2^{k-1}5, 2^{k-1}5+2,\mbox{ and }2^k5+1.$$
On positions $2^k+2$, $k\ge 3$, the standard partition has elements
$$2^{k-1}5+3, 2^{k-1}5+4,\mbox{ and }5(2^k+1),$$
while partition 11 has elements
$$2^{k-1}5+3, 5(2^{k-1}+1),\mbox{ and }2^k5+4.$$
Class 7: Partition 12 coincides with the standard partition after the 11-th elements, with the following exceptions:\\
On positions $2^k12+1$, $k\ge 0$, the standard partition has elements
$$2^k30+1, 2^k30+2,\mbox{ and }2^k60,$$
while partition 12 has elements
$$2^k30, 2^k30+2,\mbox{ and }2^k60+1.$$
On positions $2^k12+2$, $k\ge 0$, the standard partition has elements
$$2^k30+3, 2^k30+4,\mbox{ and }2^k60+5,$$
while a partition in Class 1 has elements
$$2^k30+3, 2^k30+5,\mbox{ and }2^k60+4.$$
Class 8: Partition 13 coincides with the standard partition after the 13-th elements, with the following exceptions:\\
On positions $2^k12+1$, $k\ge 0$, the standard partition has elements
$$2^k30+1, 2^k30+2,\mbox{ and }2^k60,$$
while partition 13 has elements
$$2^k30, 2^k30+1,\mbox{ and }2^k60+2.$$
On positions $2^k12+2$, $k\ge 0$, the standard partition has elements
$$2^k30+3, 2^k30+4,\mbox{ and }2^k60+5,$$
while partition 13 has elements
$$2^k30+4, 2^k30+5,\mbox{ and }2^k60+3.$$
\end{proof}

Among the partitions  obtained from the 36 statements (different ways to fill in the first five elements in the three sets preserving the pattern of the sums) using the algorithm in the proof of Proposition \ref{secsol}, there are exactly five that are equivalent to the standard partition: the standard partition itself, the one obtained from the standard partition by switching 8 and 11 in the first set, and 15 in the third set with 12 in the second (this comes from number 15 in the list of 36 possible statements, and is equivalent with the standard partition if we take $N=5$ in Definition \ref{eqpar}), the one obtained from the standard partition by switching 7 in the second set with 8 in the first, and 10 in the third set with 9 in the second (this comes from number 19 in the list of 36 possible statements, and is equivalent with the standard partition if we take $N=4$ in Definition \ref{eqpar}), partition 7 in Figure \ref{fig:part1} (this comes from number 8 in the list, and is equivalent with the standard partition if we take $N=6$ in Definition \ref{eqpar}), and the partition obtained from the standard partition by swapping the following pairs, in order: 7 and 8, 9 and 10, 10 and 11, 14 and 15, 12 and 13 (this comes from number 26 in the list, and is equivalent with the standard partition if we take $N=6$ in Definition \ref{eqpar}). 

We end this section by investigating new ways to obtain partitions equivalent to the standard partition by starting with the standard partition and reshuffling elements such that the pattern of the sums is preserved. These are generalizations of the first two reshuffles of the standard partition described above (i.e. the ones coming from numbers 15 and 19 in the list of 36), and they can help create new partition problems which can be solved using the algorithm from the proof of Proposition \ref{secsol}, and whose solutions are equivalent to the standard partition. The way they work is to replace elements in blocks of two: replace two elements of a certain rank with sum $s$ with two elements of a different rank whose sum is also $s$. The result is the following:

\begin{proposition}\label{shuffle}
The following are infinitely many ways to obtain partitions equivalent to the standard partition by starting with the standard partition and reshuffling elements such that the pattern of the sums is preserved:\\
i) Replace for each $k\ge 1$ the elements of rank $4k$ in the first and last set by the elements of rank $6k-1$ in the first two sets.\\
ii) Replace for each $k\ge 0$ the elements of rank $4k+3$ in the last two sets by the elements of rank $6k+4$ in the first two sets.
\end{proposition}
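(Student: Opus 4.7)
The plan is to verify (i) and (ii) by direct computation. In each case I identify the four relevant entries using the tabulation in Remark \ref{rempar}, check that the two pairs being interchanged have equal sums (so that the vertical sum at each affected rank is preserved), and observe that the rank-table is modified at only two ranks, so equivalence with the standard partition follows immediately, with $N$ equal to the larger of these two ranks.

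For (i), fix $k\ge 1$. From Remark \ref{rempar} with $2j+2=4k$ (i.e.\ $j=2k-1$) the rank-$4k$ entries of the standard partition are $(10k-2,\,10k-1,\,20k-5)$, and with $2j+1=6k-1$ (i.e.\ $j=3k-1$) the rank-$(6k-1)$ entries are $(15k-4,\,15k-3,\,30k-10)$. The pair in the first and last sets at rank $4k$, namely $(10k-2,\,20k-5)$, has sum $30k-7$, which equals the sum $(15k-4)+(15k-3)=30k-7$ of the pair in the first two sets at rank $6k-1$. Swapping these two pairs (leaving the untouched entries $10k-1$ at $(S_2,4k)$ and $30k-10$ at $(S_3,6k-1)$ in place) produces a new rank-table whose vertical sums at the two affected ranks are $(15k-4)+(10k-1)+(15k-3)=40k-8=S(4k)$ and $(10k-2)+(20k-5)+(30k-10)=60k-17=S(6k-1)$, matching the standard values, while every other column is unchanged. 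Hence the full sum pattern $S(n)$ is preserved; since only four cells are altered and each entry merely moves to another cell, the result remains a partition of the non-negative integers. Taking $N=6k-1$ in Definition \ref{eqpar} gives equivalence with the standard partition, and distinct values of $k$ affect disjoint pairs of ranks, yielding distinct partitions and thereby infinitely many.

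For (ii) the argument is identical with $k\ge 0$: applying Remark \ref{rempar} with $j=2k+1$ and with $j=3k+1$ yields rank-$(4k+3)$ entries $(10k+6,\,10k+7,\,20k+10)$ and rank-$(6k+4)$ entries $(15k+8,\,15k+9,\,30k+15)$. The pair in $S_2,S_3$ at rank $4k+3$ is $(10k+7,\,20k+10)$ with sum $30k+17$, matching the sum $(15k+8)+(15k+9)=30k+17$ of the pair in $S_1,S_2$ at rank $6k+4$. The same swap argument then preserves both vertical sums (to $40k+23=S(4k+3)$ and $60k+32=S(6k+4)$) and gives equivalence with $N=6k+4$.

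The only real obstacle is the bookkeeping of Remark \ref{rempar}: one must correctly distinguish odd-rank and even-rank formulas and match the parameter $j$ to each of $4k$, $6k-1$, $4k+3$, $6k+4$. After that, the conclusion reduces to two arithmetic identities and the immediate observation that a modification of the rank-table confined to two ranks satisfies Definition \ref{eqpar} trivially.
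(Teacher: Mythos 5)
Your proof is correct and takes essentially the same approach as the paper: both reduce the claim to checking that the pair of entries removed at rank $4k$ (resp.\ $4k+3$) and the pair inserted from rank $6k-1$ (resp.\ $6k+4$) have the same sum, namely $30k-7$ (resp.\ $30k+17$). Your version merely reads the entries off the mod-$5$ table of Remark~\ref{rempar} instead of the formulas in Definition~\ref{stanpar}, and additionally spells out the choice of $N$ and the distinctness of the resulting partitions, details the paper leaves implicit.
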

\begin{proof}
i) We use Definition \ref{stanpar}, and we compute the sum of the elements of rank $4k$ in the first and last set of the standard partition. We get $8\cdot 4k-2k-2-5=30k-7$. The sum of the elements of rank $6k-1$ in the first two sets is the same: $6(6k-1)-2(3k-1)-3=30k-7$.\\
ii) If $k\ge 0$, the elements of rank $4k+3$ in the last two sets in the standard partition have sum $8(4k+3)-2k-1-6=30k+17$, while the elements of rank $6k+4$ in the first two sets have sum $6(6k+4)-2(3k+2)-3=30k+17$.
\end{proof}
\section{A generalization}
In this section we generalize the problem studied in the first two section and we assume that $m=2t+1$ is an odd number.
\begin{definition}\label{mstanpar} The {\em\bf $m$-standard partition} of the non-negative integers is given by
\begin{align*}
S_1 &=\{(t+1)(n-1)-\lfloor\frac{n}{2}\rfloor+1\mid n\ge 1\}\\
S_2 & =\{(t+1)(n-1)-\lfloor\frac{n}{2}\rfloor+2\mid n\ge 1\}\\
&\vdots\\
S_t & =\{(t+1)(n-1)-\lfloor\frac{n}{2}\rfloor+t\mid n\ge 1\}\\
S_{t+1} &= \{(2t+1)(n-1)\mid n\ge 1\}
\end{align*}
Note that the sum of the $n$-th elements is $S(n)=(t+1)^2(n-1)+t\lfloor\frac{n-1}{2}\rfloor +\frac{t(t+1)}{2}$ (because $n-1=\lfloor\frac{n}{2}\rfloor+\lfloor\frac{n-1}{2}\rfloor$). It is easy to see that $S_1$ consists of the numbers with remainder 1 or $t+1$ (mod $m$), $\ldots$
the elements of $S_t$ are congruent to $t$ or $2t$ (mod $m$), while $S_{t+1}$ is the set of
multiples of $m=2t+1$.
\end{definition}

Equivalent partitions are defined similar to Definition \ref{eqpar}. We can now construct more partition problems similar to the problem studied in the first two sections. In order to make sure that the solution is equivalent to the $m$-standard partition, when we design the problem we only use elements from the first five columns of the $m$-standard partition. The following example is a problem in the case $m=7$.\\[2mm]
{\bf Problem} The non-negative integers are divided into four groups as follows:
\[ \begin{array}{ccccccccc}
A & = & \{  1, & 6, & 9, & 21, & 28, &\ldots  \}\\
B & = & \{  0, & 5, & 8, & 14, & 15, &\ldots  \}\\
C & = & \{  3, & 7, & 13, & 10, & 17, &\ldots  \}.\\
D & = & \{  2, & 4, & 11, & 12,   & &\ldots  \}.\\
\end{array} \]
Explain.\\[2mm]
{\bf Solution.} The sum of the first elements is 6. The sum of the second elements is 22, which is 16 more than the first sum. The sum of the third elements is 41, which is 19 more than the second sum. The sum of the fourth elements is 57, whch is again 16 more than the third sum. Asssuming that the fifth sum is $57+19=76$, we find that the fifth element of $D$ is $76-28-15-17=16$. Now we notice that the set of the first five elements in the four  sets is $\{0,1,2,3,4,5,6,7,8,9,10,11,12,13,14,15,16,17,21,28\}.$ Therefore, we can set the fifth elements of the first thre sets to be 18, 19, and 20, and we find that the fifth element of $D$ is $76+16-18-19-20=35$. We continue as above with the first unused numbers (i.e. 22, 23, and 24) as the sixth elements of the first three sets, and the next multiple of 7 (i.e. 42) as the sixth element of $D$. The solution is a partition equivalent to the $7$-standard partition, because after the sixth column we get exactly the columns of the $7$-standard partition.

\begin{remark}
We designed the statement of the problem as follows: we started with the 7-standard partition, we permuted the elements of the same rank, and we swapped the pairs of numbers 13 and 14, and 11 and 10 (we can do this because $10+14=11+13$).
\end{remark}

We will not give any details, but results similar to those in Section \ref{sect2} may be given for generalized partitions. For the case $m=5$ we saw in Proposition \ref{counteqcl} that there are 8 equivalence classes in the partitions obtained from all possible ways of choosing the first five elements of the sets in the partition. We used a computer to determine that for $m=7$ there are 13 equivalence classes, for $m=9$ there are 19, and for $m=11$ the number of equivalence classes is 26.

\section*{Acknowledgments} 

We thank Constantin Manoil for helpful conversations.
 

{\footnotesize
}

{\footnotesize  
\medskip
\medskip
\vspace*{1mm} 
 
\noindent {\it Eunice Krinsky}\\  
California State University, Dominguez Hills \\
Mathematics Department\\
1000 E Victoria St \\
Carson, CA 90747\\
E-mail: {\tt ekrinsky@csudh.edu}\\ \\  

\noindent {\it Serban Raianu}\\  
California State University, Dominguez Hills \\
Mathematics Department\\
1000 E Victoria St \\
Carson, CA 90747\\
E-mail: {\tt sraianu@csudh.edu}\\ \\

\noindent {\it Alexander Wittmond}\\  
California State University, Dominguez Hills \\
1000 E Victoria St \\
Carson, CA 90747\\
E-mail: {\tt awittmond1@toromail.csudh.edu} }\\ \\   

\vspace*{1mm}\noindent\footnotesize{\date{ {\bf Received}: January 1, 2017\;\; {\bf Accepted}: January 2, 2017}\\
\vspace*{1mm}\noindent\footnotesize{\date{ {\bf Communicated by Some Editor}}}

\end{document}